\newtheorem{theorem}{Theorem}[section]
\newtheorem{definition}[theorem]{Definition}
\newtheorem{lemma}[theorem]{Lemma}
\title{A cancellation-free formula for the Schur elements of the Ariki-Koike algebra}
\author{Maria Chlouveraki}
\address[M. Chlouveraki]{University of Edinburgh,
	School of Mathematics, JCMB, King's Buildings,
	Edinburgh, EH9 3JZ, UK
}
\email{maria.chlouveraki@ed.ac.uk}
\thanks{I would like to thank Iain Gordon and Stephen Griffeth for the conversations which led to the discovery of this pretty formula. In particular,  I am indebted to Stephen Griffeth for explaining to me the results of his paper \cite{DG}, which inspired this note.
I also gratefully acknowledge the support of the EPSRC through the grant EP/G04984X/1. }
\begin{document}
\maketitle

\section{Introduction}

Schur elements play a powerful role in the representation theory of symmetric algebras.
 In the case of the Ariki-Koike algebra, Schur elements are Laurent polynomials whose factors determine when Specht modules are projective irreducible and whether the algebra is semisimple.
 
Formulas for the Schur elements of the Ariki-Koike algebra have been independently obtained first by  
 Geck, Iancu and Malle \cite{GIM}, and later by Mathas \cite{Mat}.  The aim of this note is to give a cancellation-free formula for these polynomials (Theorem 5.1), so that their factors can be easily read and programmed.

\section{Partitions: definitions and notation}

A \emph{partition} $\lambda=(\lambda_1,\lambda_2,\lambda_3,\ldots)$ is a decreasing sequence of non-negative integers. We define the \emph{length of} $\lambda$ to be the smallest integer $\ell(\lambda)$ such that $\lambda_i=0$ for all $i>\ell(\lambda)$.
We write $|\lambda|:=\sum_{i \geq 1}\lambda_i$ and we say that $\lambda$ is a \emph{partition of} $m$, for some $m \in \mathbb{N}$, if $m=|\lambda|$. 
We set $n(\lambda):=\sum_{i \geq 1}  (i-1)\lambda_i$.

We define the set of nodes $[\lambda]$ of $\lambda$ to be the set
$$[\lambda]:=\{(i,j)\,\,|\,\, i\geq 1,\,\,1 \leq j \leq \lambda_i\}.$$
A node $x=(i,j)$ is called \emph{removable} if $[\lambda] \setminus \{(i,j)\}$ is still the set of nodes of a partition. Note that if $(i,j)$ is removable, then $j=\lambda_i$.

The \emph{conjugate partition} of $\lambda$ is the partition $\lambda'$ defined by
$$\lambda'_{k}:=\#\{i\,|\,i\geq 1 \text{ such that } \lambda_i\geq k\}.$$
Obviously, $\lambda_1'=\ell(\lambda)$. 
The set  of nodes of $\lambda'$ satisfies
$$(i,j) \in [\lambda'] \Leftrightarrow (j,i)\in [\lambda].$$
Note that if $(i,\lambda_i)$ is a removable node of $\lambda$, then
$\lambda_{\lambda_i}'=i.$
Moreover, we have $$n(\lambda)=\sum_{i \geq 1}  (i-1)\lambda_i= \frac{1}{2}\sum_{i \geq 1}(\lambda'_i-1)\lambda'_i.$$ 

Now, if $x=(i,j) \in [\lambda]$, we define the \emph{content} of $x$ to be the difference
$$\mathrm{cont}(x)=j-i.$$
The following lemma, whose proof is an easy combinatorial exercise (with the use of Young diagrams), relates the contents of the nodes of (the ``right rim'' of) $\lambda$ with the contents of the nodes of (the ``lower rim'' of) $\lambda'$.

\begin{lemma}\label{conj cont}
Let $\lambda=(\lambda_1,\lambda_2,\ldots)$ be a partition and let $k$ be an integer such that $1 \leq k \leq \lambda_1$. Let $q$ and $y$ be two indeterminates. Then we have
 $$\frac{1}{(q^{\lambda_1}y-1)}\cdot\left(\prod_{i=1}^{\lambda_k'} \frac{q^{\lambda_i-i+1}y-1}{q^{\lambda_i-i}y-1} \right)=\frac{1}{(q^{-\lambda'_{k}+k-1}y-1)}\cdot\left(\prod_{j=k}^{\lambda_1} \frac{q^{-\lambda'_j+j-1}y-1}{q^{-\lambda'_j+j}y-1} \right).
$$
\end{lemma}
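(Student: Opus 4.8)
The plan is to read both products as a single telescoping product taken along the south-east boundary of the Young diagram of $\lambda$. Throughout I abbreviate $f(c):=q^{c}y-1$, so that the $i$-th factor on the left is $g(i):=f(\lambda_i-i+1)/f(\lambda_i-i)$ and the $j$-th factor on the right is $h(j):=f(j-1-\lambda_j')/f(j-\lambda_j')$; the two prefactors are $1/f(\lambda_1)$ and $1/f(k-1-\lambda_k')$. I use matrix coordinates $(a,b)$ for the lattice points (corners of boxes), with $a$ increasing downward and $b$ increasing to the right, and I assign to the point $(a,b)$ the \emph{content} $b-a$.

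First I would fix the two endpoints $A=(0,\lambda_1)$, the top-right corner of the box $(1,\lambda_1)$, and $B=(\lambda_k',k-1)$, the bottom-left corner of the box $(\lambda_k',k)$ (which lies in $[\lambda]$ because $\lambda_{\lambda_k'}\geq k$, whereas the box directly below it does not). Both points lie on the south-east boundary of $[\lambda]$, and the boundary arc joining them is a staircase built only from unit down-steps and left-steps. A down-step is the right edge of a row: since the row coordinate runs from $0$ to $\lambda_k'$, these are exactly the right edges of rows $1,\dots,\lambda_k'$, and the right edge of row $i$ runs from content $\lambda_i-i+1$ to content $\lambda_i-i$. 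A left-step is the bottom edge of a column: since the column coordinate runs from $\lambda_1$ down to $k-1$, these are exactly the bottom edges of columns $k,\dots,\lambda_1$, and the bottom edge of column $j$ runs from content $j-\lambda_j'$ to content $j-1-\lambda_j'$. Thus the down-steps account precisely for the factors $g(1),\dots,g(\lambda_k')$ and the left-steps for the factors $h(k),\dots,h(\lambda_1)$.

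The key observation is that every step of the arc, down or left, decreases the content of the lattice point by exactly $1$: a down-step raises the row coordinate by one, a left-step lowers the column coordinate by one. Hence the contents of the successive lattice points of the arc are $\lambda_1,\lambda_1-1,\lambda_1-2,\dots,k-1-\lambda_k'$, and if each step is weighted by $f$(ending content)$/f$(starting content), the product over the whole arc telescopes to $f(k-1-\lambda_k')/f(\lambda_1)$. Splitting this same product according to step type, the down-step along row $i$ contributes $f(\lambda_i-i)/f(\lambda_i-i+1)=g(i)^{-1}$ and the left-step along column $j$ contributes $f(j-1-\lambda_j')/f(j-\lambda_j')=h(j)$, whence
$$\frac{\prod_{j=k}^{\lambda_1} h(j)}{\prod_{i=1}^{\lambda_k'} g(i)}=\frac{f(k-1-\lambda_k')}{f(\lambda_1)}.$$
Rearranging this equality is exactly the asserted identity, the prefactors $1/f(\lambda_1)$ and $1/f(k-1-\lambda_k')$ landing in precisely the right places.

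The computation is mechanical once the path is in place, so the only real point requiring care — the main obstacle — is the combinatorial bookkeeping that pins down the endpoints $A$ and $B$ and verifies that the down-steps and left-steps of the arc from $A$ to $B$ range over exactly the index sets $\{1,\dots,\lambda_k'\}$ and $\{k,\dots,\lambda_1\}$ appearing in the two products (including the boundary cases $k=1$, where $B=(\lambda_1',0)$, and $k=\lambda_1$). Once that matching is confirmed, the principle that content drops by one at every step does all the work and no cancellation has to be engineered by hand.
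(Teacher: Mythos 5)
Your proof is correct: reading both products as step-weights along the south-east rim, where each down-step or left-step lowers the content of the lattice point by exactly one, makes the whole product telescope from $f(\lambda_1)$ to $f(k-1-\lambda_k')$, and your bookkeeping correctly identifies the down-steps with rows $1,\dots,\lambda_k'$ and the left-steps with columns $k,\dots,\lambda_1$. The paper gives no proof of Lemma~\ref{conj cont}, stating only that it is ``an easy combinatorial exercise (with the use of Young diagrams)''; your rim-telescoping argument is precisely the natural instantiation of that sketch, so you have in effect supplied the omitted proof rather than diverged from it.
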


Finally, if $x=(i,j) \in [\lambda]$ and $\mu$ is another partition, we define the \emph{generalized hook length of $x$ with respect to $\mu$} to be the integer:
            $$h_{i,j}^{\mu}:=\lambda_i-i+\mu'_j-j+1.$$ 
For $\mu=\lambda$, the above formula becomes the classical hook length formula (giving us the length of the hook of $\lambda$ that $x$ belongs to).

\section{The Ariki-Koike algebra}

Let $d$ and $r$ be positive integers and let $R$ be a commutative domain with $1$. Fix elements
$q,\,Q_0,\,\ldots,\,Q_{d-1}$ of $R$, and assume that $q$ is invertible in $R$. Set ${\bf q}:=(q;\,Q_0,\,\ldots,\,Q_{d-1})$.
The \emph{Ariki-Koike algebra} $\mathcal{H}_{d,r}$ is the unital associative $R$-algebra with generators $T_0,\,T_1,\,\ldots,\,T_{r-1}$ and relations:
\begin{center}
$(T_0 -Q_0) (T_0 -Q_1)\cdots(T_0 -Q_{d-1})=0,$\\
$(T_i-q)(T_i+1)=0$\,\, for $1\leq i \leq r-1$,\\
$T_0T_1T_0T_1=T_1T_0T_1T_0$,\\
$T_iT_{i+1}T_i=T_{i+1}T_iT_{i+1}$\,\, for $1\leq i \leq r-2$,\\
$T_iT_j=T_jT_i$\,\, for  $0\leq i <j \leq r-1$ with $j-i>1$.
\end{center}

The Ariki-Koike algebra $\mathcal{H}_{d,r}$ is a deformation of the group algebra of the complex reflection group $G(d,1,r)=(\mathbb{Z}/d\mathbb{Z})\wr \mathfrak{S}_r$. Ariki and Koike \cite{ArKo} have proved  that  $\mathcal{H}_{d,r}$  is a free $R$-module of rank $d^rr!=|G(d,1,r)|$. Moreover, Ariki \cite{Ar} has shown that, when $R$ is a field, $\mathcal{H}_{d,r}$ is (split) semisimple if and only if 
$$P({\bf q})=\prod_{i=1}^r(1+q+\cdots+q^{i-1}) \prod_{0 \leq s <t \leq d-1}\prod_{-r<k<r}(q^kQ_s-Q_t)$$
is a non-zero element of $R$. 

A $d$-\emph{partition} of  $r$ is an ordered $d$-tuple $\lambda=(\lambda^{(0)},\lambda^{(1)},\ldots,\lambda^{(d-1)})$  of partitions $\lambda^{(s)}$ such that $\sum_{s=0}^{d-1}|\lambda^{(s)}|=r$.
Let us denote by $\mathcal{P}(d,r)$ the set of $d$-partitions of $r$.
In the semisimple case, Ariki and Koike \cite{ArKo}  constructed an irreducible $\mathcal{H}_{d,r}$-module $S^\lambda$, called a \emph{Specht module}, for each $d$-partition $\lambda$ of $r$. Further, they showed that 
$\{S^\lambda \,|\,\lambda \in \mathcal{P}(d,r)\}$
is a complete set of pairwise non-isomorphic irreducible $\mathcal{H}_{d,r}$-modules. We denote by $\chi^\lambda$ the character of the Specht module $S^\lambda$.

Now,  there exists a linear form $\tau:\mathcal{H}_{d,r} 
\rightarrow R$ which was introduced by Bremke and Malle in \cite{BreMa}, and was proved to be symmetrizing by Malle and Mathas in \cite{MaMa} whenever all $Q_i$'s are invertible in $R$. An explicit description of this form can be found in any of these two articles. Following Geck's results on symmetrizing forms \cite{Gehab}, we obtain the following definition for the Schur elements associated to the irreducible representations of $\mathcal{H}_{d,r}$.

\begin{definition}\label{Schur}
Suppose that $R$ is a field and that $P({\bf q}) \neq 0$. The \emph{Schur elements} of 
$\mathcal{H}_{d,r}$ are the elements $s_\lambda({\bf q})$ of $R$ such that
$$\tau = \sum_{\lambda \in \mathcal{P}(d,r)} \frac{1}{s_\lambda({\bf q})}\chi^\lambda. $$
\end{definition}

Schur elements play a powerful role in the representation theory of $\mathcal{H}_{d,r}$, as illustrated by the following result (cf.~\cite[Theorem 7.4.7]{GePf}, \cite[Lemme 2.6]{MaRou}).


\begin{theorem}
Suppose that $R$ is a field.
If $s_\lambda({\bf q}) \neq 0$, then the Specht module $S^\lambda$ is irreducible. Moreover,
the algebra $\mathcal{H}_{d,r}$ is semisimple if and only if $s_\lambda({\bf q}) \neq 0$ for all
$\lambda \in \mathcal{P}(d,r)$.
\end{theorem}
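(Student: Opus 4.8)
The plan is to recognise this as the specialisation theorem for the Schur elements of a symmetric algebra, due to Geck and Rouquier, and to read off both assertions from the behaviour of the central primitive idempotents under reduction. First I would set up the generic picture: regard each $s_\lambda({\bf q})$ as a Laurent polynomial in $q,Q_0,\dots,Q_{d-1}$ and choose a discrete valuation ring $\OO$ (after a suitable localisation) with fraction field $K$, maximal ideal $\mathfrak{m}$, and a specialisation $\theta\colon\OO\to R$ such that $K\mathcal{H}_{d,r}$ is split semisimple and $\mathcal{H}_{d,r}\otimes_\theta R$ is the algebra in the statement; the integrality of the Schur elements gives $s_\lambda\in\OO$. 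Fix dual bases $\{b_i\},\{b_i^\ast\}$ of $\mathcal{H}_{d,r}$ over $\OO$ with respect to $\tau$ and set $a_\lambda:=\sum_i\chi^\lambda(b_i)\,b_i^\ast\in\mathcal{H}_{d,r}$. Over $K$ the element $e_\lambda:=s_\lambda^{-1}a_\lambda$ is the central primitive idempotent attached to $S^\lambda$, so $\sum_\lambda e_\lambda=1$, the identity $a_\lambda^2=s_\lambda a_\lambda$ holds in $\mathcal{H}_{d,r}$, and $\tau$ restricts to $s_\lambda^{-1}\tr$ on the block $e_\lambda K\mathcal{H}_{d,r}\cong\mathrm{Mat}_{n_\lambda}(K)$, where $n_\lambda=\dim S^\lambda$.

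For the first assertion, suppose $\theta(s_\lambda)\neq0$. Then $s_\lambda$ is a unit in $\OO$, so $e_\lambda=s_\lambda^{-1}a_\lambda$ already lies in $\mathcal{H}_{d,r}$ and reduces to a central idempotent $\bar e_\lambda$; it is nonzero, for otherwise $e_\lambda=e_\lambda^n\in\mathfrak{m}^n\mathcal{H}_{d,r}$ for all $n$, forcing $e_\lambda\in\bigcap_n\mathfrak{m}^n\mathcal{H}_{d,r}=0$ by Krull's intersection theorem. Since distinct blocks annihilate one another, $\tau$ is orthogonal for the block decomposition and hence restricts to a symmetrising form on each block; thus $B:=e_\lambda\mathcal{H}_{d,r}$ is a symmetric $\OO$-order in $\mathrm{Mat}_{n_\lambda}(K)$ on which $\tau$ equals $s_\lambda^{-1}\tr$. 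Granting that $\bar B\cong\mathrm{Mat}_{n_\lambda}(R)$ (the crux, treated below), the reduction $S^\lambda$ of the Specht lattice is a module of dimension $n_\lambda$ over this matrix block on which $\bar e_\lambda$ acts as the identity, hence is its unique simple module; so $S^\lambda$ is irreducible, and it is projective because it lies in a block isomorphic to a matrix algebra. If $\theta(s_\lambda)\neq0$ for every $\lambda$, then $\sum_\lambda\bar e_\lambda=1$ and $R\mathcal{H}_{d,r}\cong\prod_\lambda\mathrm{Mat}_{n_\lambda}(R)$ is semisimple.

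The main obstacle is the claim $\bar B\cong\mathrm{Mat}_{n_\lambda}(R)$, namely that a nonvanishing Schur element forces the block to survive reduction as a full matrix algebra, and I would handle it through self-duality of orders. That $\bar B$ is symmetric means precisely that the $\OO$-bilinear form $\tau|_B$ is unimodular, i.e. $B$ equals its dual $B^\#=\{x\in\mathrm{Mat}_{n_\lambda}(K):\tau(xB)\subseteq\OO\}$; since $\tau=s_\lambda^{-1}\tr$ and $s_\lambda$ is a unit, this says $B=\{x:\tr(xB)\subseteq\OO\}$, the dual of $B$ for the reduced trace. A self-dual order is maximal: if $\Gamma\supseteq B$ is a maximal order then $\Gamma=\Gamma^\#\subseteq B^\#=B$, so $\Gamma=B$. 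As every maximal order in $\mathrm{Mat}_{n_\lambda}(K)$ over a discrete valuation ring is conjugate to $\mathrm{Mat}_{n_\lambda}(\OO)$, we obtain $B\cong\mathrm{Mat}_{n_\lambda}(\OO)$ and hence $\bar B\cong\mathrm{Mat}_{n_\lambda}(R)$.

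It remains to show that semisimplicity of $R\mathcal{H}_{d,r}$ forces $\theta(s_\lambda)\neq0$ for all $\lambda$; after extending $R$ we may assume $R\mathcal{H}_{d,r}$ is split semisimple. By Tits' deformation theorem the reductions $\theta(\chi^\lambda)$ are exactly the pairwise distinct irreducible characters of $R\mathcal{H}_{d,r}$, so this algebra has its own nonzero Schur elements $\tilde s_\lambda$ and central primitive idempotents $\tilde e_\lambda$, and the very formula defining $a_\lambda$ shows $\bar a_\lambda=\tilde s_\lambda\tilde e_\lambda\neq0$. Reducing $a_\lambda^2=s_\lambda a_\lambda$ gives $\bar a_\lambda^2=\theta(s_\lambda)\bar a_\lambda$, while $\bar a_\lambda=\tilde s_\lambda\tilde e_\lambda$ gives $\bar a_\lambda^2=\tilde s_\lambda\bar a_\lambda$; comparing and using $\bar a_\lambda\neq0$ yields $\theta(s_\lambda)=\tilde s_\lambda\neq0$. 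The use of the relation $a_\lambda^2=s_\lambda a_\lambda$ rather than a character value is what circumvents the awkward cancellation of $n_\lambda$ in positive characteristic. This is precisely the content of \cite[Theorem 7.4.7]{GePf} and \cite[Lemme 2.6]{MaRou}, to which one may alternatively appeal.
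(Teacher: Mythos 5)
The paper does not prove this theorem at all: it is quoted as a known result, with the proof delegated to the cited references (\cite[Theorem 7.4.7]{GePf} and \cite[Lemme 2.6]{MaRou}). Your proposal is a correct reconstruction of precisely that standard argument for symmetric algebras: passing to a discrete valuation ring dominating the localised parameter ring, showing a nonvanishing Schur element makes $e_\lambda=s_\lambda^{-1}a_\lambda$ an idempotent of the $\OO$-order whose block is self-dual, hence maximal, hence a full matrix algebra surviving reduction, and using the identity $a_\lambda^2=s_\lambda a_\lambda$ together with Tits' deformation theorem for the converse. The only places you gloss are exactly the points the references handle with care --- the existence of a DVR dominating the localisation (Chevalley's theorem, as in the setup of \cite[\S 7.4]{GePf}) and the splitness of the semisimple specialisation needed to invoke Tits (semisimplicity is not automatically preserved under inseparable scalar extension, which is why \cite{GePf} arranges splitness rather than merely ``extending $R$'') --- but as a blind proof of a theorem the paper itself only cites, this is accurate and complete in outline.
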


\section{Formulas for the Schur elements of the Ariki-Koike algebra}

The Schur elements of the Ariki-Koike algebra $\mathcal{H}_{d,r}$ have been independently calculated first by Geck, Iancu and Malle \cite{GIM}, and later by Mathas \cite{Mat}. 
From now on, for all $m \in \mathbb{N}$, let $[m]_q:=(q^m-1)/(q-1)=q^{m-1}+q^{m-2}+\cdots+q+1$.
The formula given by Mathas does not demand extra notation and is the following:

\begin{theorem}\label{Mathas} Let $\lambda=(\lambda^{(0)},\lambda^{(1)},\ldots,\lambda^{(d-1)})$ be a $d$-partition of $r$. Then
$$s_\lambda({\bf q})=(-1)^{r(d-1)} (Q_0Q_1\cdots Q_{d-1})^{-r}
q^{-\alpha({\lambda}')} \prod_{s=0}^{d-1}  \prod_{(i,j) \in [\lambda^{(s)}]}
 Q_s[{{h_{i,j}^{\lambda^{(s)}}}}]_q \cdot
 \prod_{0 \leq s< t \leq d-1} X_{st}^{\lambda} ,$$
where
$$\alpha({\lambda}')=\frac{1}{2}\sum_{s=0}^{d-1}\sum_{i \geq 1}
(\lambda^{(s)'}_i-1)\lambda^{(s)'}_i$$
and
$$ X_{st}^{\lambda}=  
 \prod_{(i,j) \in [\lambda^{(t)}]}(q^{j-i}Q_t-Q_s) 
\cdot
 \prod_{(i,j) \in [\lambda^{(s)}]}\left((q^{j-i}Q_s-q^{\lambda_1^{(t)}}Q_t) 
\prod_{k=1}^{\lambda_1^{(t)}}
\frac{q^{j-i}Q_s-q^{k-1-\lambda_k^{(t)'}}Q_t}{q^{j-i}Q_s-q^{k-\lambda_k^{(t)'}}Q_t}\right).  
$$
\end{theorem}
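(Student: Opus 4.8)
The plan is to compute $s_\lambda({\bf q})$ through the semisimple (seminormal) representation theory of $\mathcal{H}_{d,r}$ together with the explicit symmetrizing form $\tau$, rather than by expanding $\tau$ directly on the standard basis. The guiding observation is that the target expression splits cleanly into an \emph{intra-component} part $\prod_s\prod_{(i,j)\in[\lambda^{(s)}]}Q_s[h_{i,j}^{\lambda^{(s)}}]_q$ (carrying also the global normalizers $(-1)^{r(d-1)}$, $(Q_0\cdots Q_{d-1})^{-r}$ and $q^{-\alpha(\lambda')}$, the latter being $\prod_s q^{-n(\lambda^{(s)})}$ by the identity for $n(\lambda)$ recorded above) and an \emph{inter-component} part $\prod_{s<t}X_{st}^{\lambda}$. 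This strongly suggests proving the two parts along separate lines: the intra-component part should reduce, component by component, to the classical $q$-analogue of the hook-length formula for the Hecke algebra of type $A$, while the cross-terms $X_{st}^{\lambda}$ should record precisely the interaction between the contents of distinct components.

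Concretely, I would introduce the Jucys--Murphy elements $L_1,\dots,L_r$ of $\mathcal{H}_{d,r}$ and recall that, in the semisimple case $P({\bf q})\neq 0$, the Specht module $S^\lambda$ carries a seminormal basis $\{v_{\mathfrak{t}}\}$ indexed by standard $\lambda$-tableaux $\mathfrak{t}$, on which each $L_k$ acts by the scalar content $c_{\mathfrak{t}}(k)=q^{j-i}Q_s$ of the node that $\mathfrak{t}$ places $k$ into (row $i$, column $j$, component $s$). Since the tuples $(c_{\mathfrak{t}}(k))_k$ separate standard tableaux, one obtains primitive idempotents
$$F_{\mathfrak{t}}=\prod_{k=1}^{r}\ \prod_{c\neq c_{\mathfrak{t}}(k)}\frac{L_k-c}{c_{\mathfrak{t}}(k)-c},$$
the inner product ranging over the contents that can occur at step $k$. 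As $F_{\mathfrak{t}}$ is a rank-one projection inside the block labelled by $\lambda$, we have $\chi^\mu(F_{\mathfrak{t}})=\delta_{\lambda\mu}$, so Definition \ref{Schur} gives $\tau(F_{\mathfrak{t}})=1/s_\lambda({\bf q})$. The whole problem is thereby reduced to evaluating $\tau(F_{\mathfrak{t}})$ for one convenient $\mathfrak{t}$.

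I would perform that evaluation recursively on $r$. Writing $\mathfrak{t}\!\downarrow_{r-1}$ for the tableau obtained by deleting the box containing $r$ (of shape $\mu=\lambda\setminus x$), the semisimple branching rule $S^\lambda\!\downarrow_{\mathcal{H}_{d,r-1}}=\bigoplus_{x}S^{\lambda\setminus x}$ and the Ariki--Koike \cite{ArKo} seminormal coefficients express $s_\lambda({\bf q})$ as $s_\mu({\bf q})$ times an explicit ratio of content differences $c_x-c_y$, with $y$ running over the addable and removable nodes of $\mu$. Iterating down to $r=0$ presents $s_\lambda({\bf q})$ as a large product of linear factors $q^aQ_s-q^bQ_t$. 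The main obstacle is then purely combinatorial: to reorganize this path-product (ordered by how the boxes were filled) into the node-indexed product in the statement. The same-component factors must telescope into the hook lengths $[h_{i,j}^{\lambda^{(s)}}]_q$ (this is also where the powers of $q$ and the $Q_s$ accumulate), while the cross-component factors must collect into exactly $X_{st}^{\lambda}$. For the latter, Lemma \ref{conj cont} is precisely the tool needed to collapse the telescoping ratio $\prod_{k=1}^{\lambda_1^{(t)}}\frac{q^{j-i}Q_s-q^{k-1-\lambda_k^{(t)'}}Q_t}{q^{j-i}Q_s-q^{k-\lambda_k^{(t)'}}Q_t}$ along the rim of $\lambda^{(t)}$, so I would arrange the cross-terms into the form to which that lemma applies.

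Finally, I would verify the outcome against two independent checks that also fix the global constants: specializing $d=1$ must recover the classical formula $s_\lambda=q^{-n(\lambda)}\prod_{x\in[\lambda]}[h(x)]_q$ for the Hecke algebra of type $A_{r-1}$, and the multiset of factors of $\prod_{\lambda}s_\lambda({\bf q})$ must match $P({\bf q})$, consistent with Ariki's semisimplicity criterion. Should the seminormal bookkeeping prove unwieldy, I would keep in reserve the Clifford-theoretic route of Geck--Iancu--Malle \cite{GIM}: restrict along $G(d,1,r)\supset (\mathbb{Z}/d\mathbb{Z})^r\rtimes(\mathfrak{S}_{|\lambda^{(0)}|}\times\cdots\times\mathfrak{S}_{|\lambda^{(d-1)}|})$, write $s_\lambda$ through the type $A$ Schur elements of the components, and again resolve the cross-terms into $X_{st}^{\lambda}$ by means of Lemma \ref{conj cont}.
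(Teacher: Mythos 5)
First, a point of calibration: the paper does not prove this statement at all. Theorem \ref{Mathas} is imported verbatim from Mathas \cite{Mat} (with the parallel symbol-based formula of Geck--Iancu--Malle \cite{GIM} quoted as Theorem \ref{sym}), and the note's own contribution, the cancellation-free formula of Theorem 5.1, takes it as its starting point. So your proposal can only be measured against the cited source, and there it fares well in outline: Mathas's argument is indeed run through the machinery you describe --- Jucys--Murphy elements acting on a seminormal basis by the contents $q^{j-i}Q_s$, primitive idempotents $F_{\mathfrak{t}}$ obtained by Lagrange interpolation in the $L_k$, the identity $\tau(F_{\mathfrak{t}})=1/s_\lambda({\bf q})$ coming straight from Definition \ref{Schur} since $\chi^\mu(F_{\mathfrak{t}})=\delta_{\lambda\mu}$, and a recursion on $r$ through the seminormal ($\gamma$-)coefficients attached to removing the node containing $r$. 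Each of these ingredients is standard and correct in the semisimple regime $P({\bf q})\neq 0$.

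The genuine gap is that your proposal stops exactly where the theorem begins. The statement is a specific closed product, and essentially all of its content lies in the step you label ``purely combinatorial'': showing that the path-product of content differences accumulated along the branching recursion telescopes, independently of the chosen tableau, into $(-1)^{r(d-1)}(Q_0\cdots Q_{d-1})^{-r}q^{-\alpha(\lambda')}\prod_s\prod_{(i,j)\in[\lambda^{(s)}]}Q_s[h_{i,j}^{\lambda^{(s)}}]_q\prod_{s<t}X_{st}^{\lambda}$ with exactly the stated signs and powers of $q$ and the $Q_s$. You assert this reorganization rather than perform it, and in \cite{Mat} that verification occupies the bulk of the work: one must compute the explicit factor by which $s_\lambda$ differs from $s_{\lambda\setminus x}$ and match it against the quotient of the closed formulas, which is precisely the kind of manipulation this paper carries out (in the opposite direction) in its proof of Theorem 5.1 via Equality (\ref{X}) and (\ref{add1}). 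Note also that your appeal to Lemma \ref{conj cont} is misplaced in role: within this paper, that lemma is the device for converting Mathas's $X_{st}^{\lambda}$ into the cancellation-free factors \emph{after} Theorem \ref{Mathas} is available, and invoking it does not supply the missing induction here. Finally, your closing checks cannot rescue the gap: the $d=1$ specialization constrains only the same-component factors, and the multiset of factors of $\prod_\lambda s_\lambda({\bf q})$ agrees with $P({\bf q})$ only up to multiplicities, so neither test pins down the cross-terms $X_{st}^{\lambda}$. As it stands, the proposal is a sound strategy memo that correctly reconstructs the architecture of the cited proof, but not a proof of the stated formula.
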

$ $

The formula by Geck, Iancu and Malle is more symmetric, and describes the Schur elements in terms of \emph{beta numbers}. If $\lambda=(\lambda^{(0)},\lambda^{(1)},\ldots,\lambda^{(d-1)})$ is a $d$-partition of $r$, then the \emph{length of} $\lambda$ is $\ell(\lambda)=\mathrm{max}\{\ell(\lambda^{(s)})\,|\,0\leq s \leq d-1\}$. 
 Fix an integer $L$ such that $L \geq \ell(\lambda)$. The $L$-\emph{beta numbers} for $\lambda^{(s)}$ are the integers $\beta_i^{(s)}=\lambda_i^{(s)}+L-i$ for $i = 1, \ldots , L$. 
Set $B^{(s)} = \{\beta_1^{(s)}, \ldots, \beta_L^{(s)}\}$ for $s=0,\ldots,d-1$. The matrix $B=(B^{(s)})_{0\leq s \leq d-1}$ is called the $L$-\emph{symbol} of $\lambda$.

\begin{theorem}\label{sym}
Let  $\lambda=(\lambda^{(0)},\ldots,\lambda^{(d-1)})$ be a $d$-partition of $r$
with $L$-symbol $B=(B^{(s)})_{0\leq s \leq d-1}$, where $L \geq \ell(\lambda)$.  Let $a_{L}:=r(d-1)+\binom{ d}{ 2}\binom{ L}{ 2}$ and $b_{L}:=dL(L-1)(2dL-d-3)/12$. Then  $$s_\lambda({\bf q})=(-1)^{a_{L}} x^{b_{L}}(q-1)^{-r}(Q_0Q_1\ldots Q_{d-1})^{-r}\nu_\lambda/ \delta_\lambda,$$ where
$$
\nu_\lambda=
\prod_{0\leq s<t\leq d-1}(Q_s-Q_t)^L\prod_{0 \leq s,\,t \leq d-1}\prod_{b_s \in B^{(s)}}\prod_{1 \leq k \leq b_s} 
(q^kQ_s-Q_t)$$
and
$$\delta_\lambda=\prod_{0\leq s< t \leq d-1}\prod_{(b_s,b_t) \in B^{(s)}\times B^{(t)}}(q^{b_s}Q_s-q^{b_t}Q_t) \prod_{0 \leq s \leq d-1} \prod_{1 \leq i < j \leq L}(q^{b_i^{(s)}}Q_s-q^{b_j^{(s)}}Q_s).
$$
\end{theorem}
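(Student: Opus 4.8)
The plan is to derive Theorem~\ref{sym} directly from Mathas's formula (Theorem~\ref{Mathas}) by rewriting each of its factors in the language of the $L$-beta numbers $\beta_i^{(s)}$, the only genuinely combinatorial input being Lemma~\ref{conj cont}. I would split the right-hand side of Theorem~\ref{Mathas} into three pieces --- the diagonal hook-length factors $\prod_{(i,j)}Q_s[h_{i,j}^{\lambda^{(s)}}]_q$, the off-diagonal factors $X_{st}^{\lambda}$, and the global sign/monomial prefactor --- and match each against the corresponding part of $\nu_\lambda/\delta_\lambda$.

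For the diagonal part, writing $[m]_q=(q^m-1)/(q-1)$ extracts the global $(q-1)^{-r}$ and a factor $Q_s^{|\lambda^{(s)}|}$ for each $s$, reducing the problem to the classical beta-number evaluation of hook lengths,
\[
\prod_{(i,j)\in[\lambda^{(s)}]}\bigl(q^{h_{i,j}^{\lambda^{(s)}}}-1\bigr)=q^{\,n(\lambda^{(s)})}\,\frac{\prod_{i=1}^{L}\prod_{k=1}^{\beta_i^{(s)}}(q^k-1)}{\prod_{1\le i<j\le L}(q^{\beta_i^{(s)}}-q^{\beta_j^{(s)}})}.
\]
The numerator here is exactly the $t=s$ contribution to $\nu_\lambda$ (since $q^kQ_s-Q_s=Q_s(q^k-1)$, up to a power of $Q_s$ absorbed into the global $Q$-monomial) and the denominator is the second product of $\delta_\lambda$, so the diagonal block falls into place at once. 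Moreover, by the identity $n(\mu)=\tfrac12\sum_i(\mu_i'-1)\mu_i'$ recorded in Section~2, the accumulated power $\prod_s q^{\,n(\lambda^{(s)})}=q^{\alpha(\lambda')}$ cancels the factor $q^{-\alpha(\lambda')}$ of Theorem~\ref{Mathas}, a useful consistency check that leaves only the cross terms to supply $x^{b_L}$.

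The heart of the proof is the off-diagonal block $X_{st}^{\lambda}$. Fixing a node $(i,j)\in[\lambda^{(s)}]$ and dividing numerator and denominator by $q^{j-i}Q_s$, the telescoping product
\[
\prod_{k=1}^{\lambda_1^{(t)}}\frac{q^{j-i}Q_s-q^{k-1-\lambda_k^{(t)'}}Q_t}{q^{j-i}Q_s-q^{k-\lambda_k^{(t)'}}Q_t}
\]
becomes $\prod_{k=1}^{\lambda_1^{(t)}}\frac{q^{k-1-\lambda_k^{(t)'}}y-1}{q^{k-\lambda_k^{(t)'}}y-1}$ with $y=q^{\,i-j}Q_t/Q_s$, which is precisely the right-hand side of Lemma~\ref{conj cont}, with its free index equal to $1$, applied to $\lambda^{(t)}$; the lemma then rewrites it as the left-hand side, a product over the \emph{rows} of $\lambda^{(t)}$ whose factors $\frac{q^{\lambda_{i'}^{(t)}-i'+1}y-1}{q^{\lambda_{i'}^{(t)}-i'}y-1}$ are governed by the beta numbers $\beta_{i'}^{(t)}=\lambda_{i'}^{(t)}+L-i'$. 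Combined with the prefactor $(q^{j-i}Q_s-q^{\lambda_1^{(t)}}Q_t)$, with the remaining factor $\prod_{(i,j)\in[\lambda^{(t)}]}(q^{j-i}Q_t-Q_s)$ of $X_{st}^{\lambda}$, and then multiplied over all nodes of $\lambda^{(s)}$, the whole expression should collapse into the symmetric factors $\prod(q^{b_s}Q_s-q^{b_t}Q_t)$ of $\delta_\lambda$, the cross terms $\prod_k(q^kQ_s-Q_t)$ with $s\neq t$ of $\nu_\lambda$, and the factor $(Q_s-Q_t)^L$.

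Finally I would collect the residual monomials and signs: the powers of $q$ left over from the denominators $q^{\beta_i}-q^{\beta_j}$ and from the content shifts must sum to the exponent in $x^{b_L}$, while the signs produced by reorienting each factor $q^aQ_s-q^bQ_t$ must sum to $(-1)^{a_L}$. Since Mathas's expression carries no reference to $L$, performing the same computation for general $L\geq\ell(\lambda)$ simultaneously forces the prescribed $L$-dependence of $a_L$ and $b_L$; equivalently one may fix $L=\ell(\lambda)$ and verify separately that replacing $L$ by $L+1$ (which sends each $\beta_i^{(s)}\mapsto\beta_i^{(s)}+1$ and appends a beta number $0$) leaves $s_\lambda({\bf q})$ unchanged. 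I expect the off-diagonal collapse above to be the principal obstacle: Lemma~\ref{conj cont} supplies the one conceptual step, but turning the resulting row-products over $\lambda^{(t)}$ and the node-contents of $\lambda^{(s)}$ into the clean symmetric products of $\nu_\lambda$ and $\delta_\lambda$, and reconciling the exact exponents $a_L$ and $b_L$, is a delicate though essentially mechanical bookkeeping task.
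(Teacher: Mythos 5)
You should first be aware that the paper contains no proof of Theorem~\ref{sym} at all: it is quoted from Geck--Iancu--Malle \cite{GIM} (with Mathas \cite{Mat} having independently verified agreement with his own formula), so there is no internal argument to compare against. What you have written is therefore a derivation where the paper offers only a citation. Your route --- taking Theorem~\ref{Mathas} as given and showing the two closed forms agree via beta numbers --- is legitimate and non-circular (the paper uses Theorem~\ref{sym} only later, to get the symmetry $X_{st}^{\lambda}(Q_s,Q_t)=X_{st}^{\mu}(Q_t,Q_s)$ in the proof of Theorem~5.1), and its mechanism is sound: Lemma~\ref{conj cont} at $k=1$ does convert the column product in $X_{st}^{\lambda}$ into a row product, and the resulting factors do telescope along the rows of $\lambda^{(s)}$, since for fixed rows $i$ of $\lambda^{(s)}$ and $i'$ of $\lambda^{(t)}$ the product over $j$ collapses to a ratio whose denominator exponent is exactly $\beta_{i'}^{(t)}-\beta_i^{(s)}$, producing the factors $(q^{b_s}Q_s-q^{b_t}Q_t)$ of $\delta_\lambda$. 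Interestingly, this is the same machinery the paper runs in the \emph{opposite} direction in Section~5 (there with the lemma at $k=\lambda_i$ and an induction on removable nodes), so your proposal in effect reconstructs the known equivalence of the two formulas.

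There is, however, one concrete slip, and one genuine incompleteness. The slip: your displayed hook-length identity is off by an $L$-dependent power of $q$. Since the multiset of hook lengths in row $i$ is $\{1,\ldots,\beta_i\}\setminus\{\beta_i-\beta_j\,:\,j>i\}$ and $\prod_{i<j}(q^{\beta_i}-q^{\beta_j})=q^{\sum_{i<j}\beta_j}\prod_{i<j}(q^{\beta_i-\beta_j}-1)$, the correct exponent is
\begin{equation*}
\sum_{1\leq i<j\leq L}\beta_j^{(s)}\;=\;n(\lambda^{(s)})+\binom{L}{3},
\end{equation*}
not $n(\lambda^{(s)})$. Consequently your ``consistency check'' --- that $\prod_s q^{n(\lambda^{(s)})}$ cancels $q^{-\alpha(\lambda')}$ exactly, leaving the cross terms alone to supply $x^{b_L}$ --- fails already for $d=1$ and $L\geq 3$: there $b_L=L(L-1)(2L-4)/12=\binom{L}{3}\neq 0$, yet there are no cross terms at all, so the diagonal block must contribute to $x^{b_L}$ (here $x$ is $q$, a notational remnant of \cite{GIM}). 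This is repairable and would be caught by the bookkeeping you defer. The incompleteness is that this bookkeeping --- the collapse of the off-diagonal block into $(Q_s-Q_t)^L$, the $s\neq t$ factors of $\nu_\lambda$, and $\delta_\lambda$, together with the reconciliation of the exact exponents $a_L$ and $b_L$ and the $L\to L+1$ stability check (which correctly sends $\beta_i\mapsto\beta_i+1$ and appends a beta number $0$) --- is precisely where all the content of the identity lives, and you assert rather than execute it. As a proof the proposal is therefore an accurate and workable plan with the decisive computation still owed.
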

$ $
As the reader may see, in both formulas above, the factors of $s_\lambda({\bf q})$ are not obvious.
Hence, it is not obvious for which values of ${\bf q}$ the Schur element $s_\lambda({\bf q})$ becomes zero.

\section{A cancellation-free formula}

In this section, we will give a  cancellation-free formula for the Schur elements of $\mathcal{H}_{d,r}$. This formula is also symmetric.

Let $\lambda=(\lambda^{(0)},\lambda^{(1)},\ldots,\lambda^{(d-1)})$ be a $d$-partition of $r$. 
The multiset $(\lambda_i^{(s)})_{0 \leq s \leq d-1,\,i\geq 1}$ is a composition of $r$ (\emph{i.e.}
a multiset of non-negative integers whose sum is equal to $r$). By reordering the elements of this composition, we obtain a partition of $r$. We denote this partition by $\bar{\lambda}$.
(e.g., if $\lambda=((4,1),\emptyset,(2,1))$, then $\bar{\lambda}=(4,2,1,1)$).

\begin{theorem} Let $\lambda=(\lambda^{(0)},\lambda^{(1)},\ldots,\lambda^{(d-1)})$ be a $d$-partition of $r$. 
Then 
\begin{equation}\label{pretty}
s_\lambda({\bf q})=(-1)^{r(d-1)}q^{-n(\bar{\lambda})}(q-1)^{-r} \prod_{s=0}^{d-1}
\prod_{(i,j) \in [\lambda^{(s)}]} \prod_{t=0}^{d-1}  (q^{h_{i,j}^{\lambda^{(t)}}}Q_sQ_t^{-1}-1).
\end{equation}
Since the total number of nodes in $\lambda$ is equal to $r$, the above formula can be rewritten as follows:
\begin{equation}\label{claim}
s_\lambda({\bf q})=(-1)^{r(d-1)}q^{-n(\bar{\lambda})}
 \prod_{0 \leq s \leq d-1} \prod_{(i,j) \in [\lambda^{(s)}]}\left( [{{h_{i,j}^{\lambda^{(s)}}}}]_q
 \prod_{0 \leq t \leq d-1,\, t\neq s} (q^{h_{i,j}^{\lambda^{(t)}}}Q_sQ_t^{-1}-1)\right).
 \end{equation}
\end{theorem}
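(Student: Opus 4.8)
The plan is to derive \eqref{pretty} directly from Mathas's formula (Theorem \ref{Mathas}), and then to observe that \eqref{claim} is a purely formal consequence of \eqref{pretty}: isolating the diagonal factors $t=s$, writing $q^{h_{i,j}^{\lambda^{(s)}}}Q_sQ_s^{-1}-1=(q-1)\,[h_{i,j}^{\lambda^{(s)}}]_q$, and using $\sum_s|\lambda^{(s)}|=r$ to absorb the factor $(q-1)^{-r}$ over the $r$ nodes, turns one formula into the other with no further work. So the entire content lies in matching \eqref{pretty} against Mathas. First I would compare the two expressions factor by factor, organized by the pair of indices $(s,t)$. The diagonal part of \eqref{pretty}, namely $\prod_s\prod_{(i,j)\in[\lambda^{(s)}]}(q^{h_{i,j}^{\lambda^{(s)}}}-1)$, once $(q-1)^{-r}$ is distributed, is exactly the hook-length product $\prod_s\prod_{(i,j)}[h_{i,j}^{\lambda^{(s)}}]_q$ of Mathas. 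The off-diagonal part of \eqref{pretty} attached to an unordered pair $s<t$ is the product of the two mixed-hook factors
$$\prod_{(i,j)\in[\lambda^{(s)}]}(q^{h_{i,j}^{\lambda^{(t)}}}Q_sQ_t^{-1}-1)\cdot\prod_{(i,j)\in[\lambda^{(t)}]}(q^{h_{i,j}^{\lambda^{(s)}}}Q_tQ_s^{-1}-1),$$
which must be reconciled with Mathas's single block $X_{st}^{\lambda}$.

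The key step, and the place where Lemma \ref{conj cont} enters, is the identity
$$X_{st}^{\lambda}=(\text{monomial in }q,Q_s,Q_t)\cdot\prod_{(i,j)\in[\lambda^{(s)}]}(q^{h_{i,j}^{\lambda^{(t)}}}Q_sQ_t^{-1}-1)\cdot\prod_{(i,j)\in[\lambda^{(t)}]}(q^{h_{i,j}^{\lambda^{(s)}}}Q_tQ_s^{-1}-1).$$
To prove it I would, for each node $(i,j)\in[\lambda^{(s)}]$, substitute $y=q^{j-i}Q_sQ_t^{-1}$ and $\mu=\lambda^{(t)}$ into Lemma \ref{conj cont}: after factoring $Q_t$ out of every difference in the telescoping product over $k$ appearing in $X_{st}^{\lambda}$, that product becomes precisely the right-hand side of the lemma, and the lemma rewrites it as the left-hand ``row'' product, whose surviving numerator is $q^{\lambda_i^{(s)}-i+\lambda^{(t)'}_j-j+1}Q_sQ_t^{-1}-1=q^{h_{i,j}^{\lambda^{(t)}}}Q_sQ_t^{-1}-1$, the desired mixed-hook factor. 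The prefactor $\prod_{(i,j)\in[\lambda^{(t)}]}(q^{j-i}Q_t-Q_s)$ of $X_{st}^{\lambda}$, together with the boundary terms released by the telescoping, should then assemble into the symmetric partner $\prod_{(i,j)\in[\lambda^{(t)}]}(q^{h_{i,j}^{\lambda^{(s)}}}Q_tQ_s^{-1}-1)$, plus explicit powers of $q$, $Q_s$ and $Q_t$.

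The final step is the bookkeeping of all accumulated monomial prefactors. I expect the $Q$-exponents to cancel globally: Mathas's $(Q_0\cdots Q_{d-1})^{-r}\prod_{s,(i,j)}Q_s$ should exactly neutralize the $Q_sQ_t^{-1}$ monomials pulled out above, leaving \eqref{pretty} free of any $Q_s$. For the $q$-exponent, the diagonal already supplies $\alpha(\lambda')=\tfrac12\sum_s\sum_i(\lambda^{(s)'}_i-1)\lambda^{(s)'}_i=\sum_s n(\lambda^{(s)})$, and the off-diagonal monomials supply the cross terms; the target exponent $-n(\bar\lambda)$ is then reached via $n(\bar\lambda)=\sum_s n(\lambda^{(s)})+\sum_{s<t}\sum_{i,i'}\min(\lambda_i^{(s)},\lambda_{i'}^{(t)})$, which itself follows from the elementary identity $n(\mu)=\sum_{p<p'}\min(\mu_p,\mu_{p'})$ applied to the sorted multiset $\bar\lambda$. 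The sign $(-1)^{r(d-1)}$ carries over from Mathas unchanged, provided each rewriting of a difference preserves its orientation, which I would verify. The main obstacle is precisely this last reconciliation: checking that the $q$-power generated by the telescoping in $X_{st}^{\lambda}$ is exactly the cross term $\sum_{i,i'}\min(\lambda_i^{(s)},\lambda_{i'}^{(t)})$ and that all $Q_s$-powers vanish simultaneously. This is a bookkeeping that couples every pair $(s,t)$ at once, and it is where an exponent or sign slip is most likely to hide.
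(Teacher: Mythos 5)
Your skeleton coincides with the paper's: both start from Mathas's formula (Theorem \ref{Mathas}), reduce everything to one identity per pair $s<t$ expressing $X_{st}^{\lambda}$ as an explicit monomial times the two mixed-hook products, and your exponent bookkeeping is the paper's Lemma \ref{lemma q} in disguise (your cross term $\sum_{i,i'}\min(\lambda^{(s)}_i,\lambda^{(t)}_{i'})$ equals $\sum_{i}\lambda^{(s)'}_i\lambda^{(t)'}_i$, and the monomial $Q_s^{|\lambda^{(t)}|}Q_t^{|\lambda^{(s)}|}$ per pair does cancel $(Q_0\cdots Q_{d-1})^{-r}\prod_s Q_s^{|\lambda^{(s)}|}$ globally). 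The gap is in your key step. A repairable slip first: to match the product over $k$ in $X_{st}^{\lambda}$ with the right-hand side of Lemma \ref{conj cont} you must substitute $y=q^{i-j}Q_tQ_s^{-1}$ (the paper takes $y=q^{i-\lambda_i}Q_tQ_s^{-1}$ at the removable node), not $y=q^{j-i}Q_sQ_t^{-1}$. More seriously, your claim that the lemma, applied at a node $(i,j)\in[\lambda^{(s)}]$, leaves $q^{h_{i,j}^{\lambda^{(t)}}}Q_sQ_t^{-1}-1$ as the ``surviving numerator'' is false: with $\mu=\lambda^{(t)}$ and $k=1$, the lemma converts the column-rim product into
$\prod_{i'=1}^{\mu'_1}\bigl(q^{\mu_{i'}-i'+1+i-j}Q_tQ_s^{-1}-1\bigr)\big/\bigl(q^{\mu_{i'}-i'+i-j}Q_tQ_s^{-1}-1\bigr)$
times boundary terms --- a product over the \emph{rows of} $\lambda^{(t)}$ of $Q_tQ_s^{-1}$-type factors, in which the node's own mixed hook $h_{i,j}^{\lambda^{(t)}}=\lambda^{(s)}_i-i+\mu'_j-j+1$ never appears. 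The mixed-hook factors emerge only after multiplying over all nodes, telescoping in $j$ along each row of $\lambda^{(s)}$ and then across rows of varying length, converting between $Q_tQ_s^{-1}$- and $Q_sQ_t^{-1}$-type factors, and merging the leftover denominators with the prefactor $\prod_{(i,j)\in[\lambda^{(t)}]}(q^{j-i}Q_t-Q_s)$ to produce the second product over $[\lambda^{(t)}]$. This global assembly is the entire difficulty, and your sketch only asserts it (``should then assemble'').

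The paper sidesteps exactly this by proving the identity for $X_{st}^{\lambda}$ by induction on the number of nodes of $\lambda^{(s)}$: removing a removable node $(i,\lambda_i)$ changes only the mixed hooks in row $i$ of $\lambda^{(s)}$ (correction factor $A$, handled by pulling out monomials) and in column $\lambda_i$ of $\lambda^{(t)}$ (factor $B$), and Lemma \ref{conj cont} is applied exactly once per step, to $B$, so no global telescoping is ever needed; the base case $\lambda^{(s)}=\emptyset$ is a direct row-reversal computation. The paper also invokes the symmetric formula of Theorem \ref{sym} to exchange the roles of $\lambda^{(s)}$ and $\lambda^{(t)}$ and thereby avoid a second induction --- a point your direct approach must also confront, since $X_{st}^{\lambda}$ treats $s$ and $t$ asymmetrically. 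In effect your plan is the paper's induction unrolled into a single computation; it could likely be repaired along those lines, but as written the central identity is asserted rather than proved.
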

$ $

We will now proceed to the proof of the above result.
Following Theorem \ref{Mathas}, we have that

$$s_\lambda({\bf q})=(-1)^{r(d-1)} (Q_0Q_1\cdots Q_{d-1})^{-r}
q^{-\alpha({\lambda}')} \prod_{s=0}^{d-1}  \prod_{(i,j) \in [\lambda^{(s)}]}
 Q_s[{{h_{i,j}^{\lambda^{(s)}}}}]_q \cdot
 \prod_{0 \leq s< t \leq d-1} X_{st}^{\lambda} ,$$
where
$$\alpha({\lambda}')=\frac{1}{2}\sum_{s=0}^{d-1}\sum_{i \geq 1}
(\lambda^{(s)'}_i-1)\lambda^{(s)'}_i$$
and
$$ X_{st}^{\lambda}=  
 \prod_{(i,j) \in [\lambda^{(t)}]}(q^{j-i}Q_t-Q_s) 
\cdot
 \prod_{(i,j) \in [\lambda^{(s)}]}\left((q^{j-i}Q_s-q^{\lambda_1^{(t)}}Q_t) 
\prod_{k=1}^{\lambda_1^{(t)}}
\frac{q^{j-i}Q_s-q^{k-1-\lambda_k^{(t)'}}Q_t}{q^{j-i}Q_s-q^{k-\lambda_k^{(t)'}}Q_t}\right).  
$$

The following lemma relates the terms $q^{-n(\bar{\lambda})}$ and $q^{-\alpha({\lambda}')}$ .

\begin{lemma}\label{lemma q}
Let $\lambda$ be a $d$-partition of $r$. We have that
$$\alpha(\lambda')+\sum_{0\leq s <t\leq d-1}\sum_{i\geq 1}\lambda^{(s)'}_i\lambda^{(t)'}_i=
n(\bar{\lambda}).$$
\end{lemma}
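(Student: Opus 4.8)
The plan is to rewrite both sides of the claimed identity entirely in terms of the conjugate parts $\lambda^{(s)'}_i$, using the quadratic expression for $n(\cdot)$ recorded in Section 2, namely $n(\mu)=\frac{1}{2}\sum_{i\geq 1}(\mu'_i-1)\mu'_i$ for any partition $\mu$. Applying this to each component $\lambda^{(s)}$ and comparing with the definition of $\alpha(\lambda')$, I first observe that $\alpha(\lambda')=\sum_{s=0}^{d-1} n(\lambda^{(s)})$. Thus the left-hand side becomes a sum over $s$ of $\frac{1}{2}\sum_i\bigl((\lambda^{(s)'}_i)^2-\lambda^{(s)'}_i\bigr)$, plus the cross term $\sum_{s<t}\sum_i \lambda^{(s)'}_i\lambda^{(t)'}_i$.

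The key combinatorial input is the description of the conjugate of the reordered partition $\bar{\lambda}$. Since $\bar{\lambda}$ is, as a multiset, exactly the collection of all parts $\lambda_i^{(s)}$, the number of its parts that are $\geq k$ is $\sum_s \#\{i:\lambda_i^{(s)}\geq k\}$, that is,
$$\bar{\lambda}'_k=\sum_{s=0}^{d-1}\lambda^{(s)'}_k \quad\text{for all } k\geq 1.$$
I would then expand the right-hand side $n(\bar{\lambda})=\frac{1}{2}\sum_k(\bar{\lambda}'_k)^2-\frac{1}{2}\sum_k \bar{\lambda}'_k$, substitute this formula for $\bar{\lambda}'_k$, and expand the square $(\sum_s \lambda^{(s)'}_k)^2=\sum_s(\lambda^{(s)'}_k)^2+2\sum_{s<t}\lambda^{(s)'}_k\lambda^{(t)'}_k$.

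Comparing term by term, the diagonal squares $\frac{1}{2}\sum_s\sum_k(\lambda^{(s)'}_k)^2$ and the off-diagonal cross terms $\sum_{s<t}\sum_k \lambda^{(s)'}_k\lambda^{(t)'}_k$ match the two pieces of the left-hand side exactly, provided the linear terms also agree. For those, I would use that $\sum_k \lambda^{(s)'}_k=|\lambda^{(s)}|$ and $\sum_{s}|\lambda^{(s)}|=r$, so that the term $\frac{1}{2}\sum_k \bar{\lambda}'_k=\frac{1}{2}r$ on the right cancels against the sum over $s$ of the contributions $-\frac{1}{2}\sum_k \lambda^{(s)'}_k=-\frac{1}{2}|\lambda^{(s)}|$ coming from each $n(\lambda^{(s)})$ on the left. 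The whole argument is elementary algebra once the conjugation formula is in hand; the only point requiring care --- the ``main obstacle'', such as it is --- is establishing $\bar{\lambda}'_k=\sum_s\lambda^{(s)'}_k$ cleanly, which amounts to the observation that conjugation converts the operation of merging the parts into the operation of adding the conjugate parts.
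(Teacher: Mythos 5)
Your proposal is correct and follows essentially the same route as the paper: both rest on the identity $\bar{\lambda}'_k=\sum_{s=0}^{d-1}\lambda^{(s)'}_k$, then expand $n(\bar{\lambda})=\frac{1}{2}\sum_k(\bar{\lambda}'_k-1)\bar{\lambda}'_k$ via the square of the sum and regroup the diagonal terms into $\alpha(\lambda')$ and the cross terms into $\sum_{s<t}\sum_k\lambda^{(s)'}_k\lambda^{(t)'}_k$. The only cosmetic difference is that you separate out and cancel the linear terms (each side contributing $-\frac{1}{2}r$) explicitly, while the paper keeps the linear part bundled inside $\frac{1}{2}\sum_s\sum_i(\lambda^{(s)'}_i-1)\lambda^{(s)'}_i=\alpha(\lambda')$ throughout.
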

\begin{proof}
Following the definition of the conjugate partition, we have
$\bar{\lambda}^{'}_i = \sum_{s=0}^{d-1}\lambda_i^{(s)'},$
for all $i \geq 1$. Therefore,
$$n(\bar{\lambda})=\frac{1}{2}\sum_{i \geq 1}(\bar{\lambda}^{'}_i-1)\bar{\lambda}^{'}_i=
\frac{1}{2}\sum_{i \geq 1}\left(\left(\sum_{s=0}^{d-1}\lambda_i^{(s)'}-1\right)\cdot\sum_{s=0}^{d-1}\lambda_i^{(s)'}\right)$$
$$=\frac{1}{2}\sum_{i \geq 1}
\left( \sum_{0\leq s <t\leq d-1}2\cdot\lambda^{(s)'}_i\lambda^{(t)'}_i 
+\sum_{s=0}^{d-1}{\lambda_i^{(s)'}}^2-\sum_{s=0}^{d-1}{\lambda_i^{(s)'}}\right)$$
$$= \sum_{0\leq s <t\leq d-1}\sum_{i \geq 1}\lambda^{(s)'}_i\lambda^{(t)'}_i +
\frac{1}{2} \sum_{s=0}^{d-1}\sum_{i \geq 1}(\lambda^{(s)'}_i-1)\lambda^{(s)'}_i
=\sum_{0\leq s <t\leq d-1}\sum_{i \geq 1}\lambda^{(s)'}_i\lambda^{(t)'}_i +\alpha(\lambda')
$$

\end{proof}

Hence, to prove Equality (\ref{claim}), it is enough to show that, for all $0\leq s <t\leq d-1$,
\begin{equation}\label{X}
X_{st}^{\lambda}=q^{-\sum_{i\geq 1}\lambda^{(s)'}_i\lambda^{(t)'}_i} 
Q_s^{| \lambda^{(t)}|} Q_t^{|\lambda^{(s)}|}
\prod_{(i,j) \in [\lambda^{(s)}]}
 (q^{h_{i,j}^{\lambda^{(t)}}}Q_sQ_t^{-1}-1)\cdot 
\prod_{(i,j) \in [\lambda^{(t)}]}
 (q^{h_{i,j}^{\lambda^{(s)}}}Q_tQ_s^{-1}-1). 
\end{equation}

We will proceed by induction on the number of nodes of $\lambda^{(s)}$. We do not need to do the same for $\lambda^{(t)}$, because the symmetric formula for the Schur elements given by Theorem \ref{sym} implies the following: if $\mu$ is the multipartition obtained from $\lambda$ by exchanging 
$\lambda^{(s)}$ and $\lambda^{(t)}$, then
$$X_{st}^{\lambda}(Q_s,Q_t)=X_{st}^{\mu}(Q_t,Q_s).$$

If $\lambda^{(s)}=\emptyset$, then
$$X_{st}^{\lambda}=\prod_{(i,j) \in [\lambda^{(t)}]}(q^{j-i}Q_t-Q_s)=Q_s^{| \lambda^{(t)}|}
\prod_{(i,j) \in [\lambda^{(t)}]}(q^{j-i}Q_tQ_s^{-1}-1)=
Q_s^{| \lambda^{(t)}|}\prod_{i=1}^{\lambda^{(t)'}_1} \prod_{j=1}^{\lambda^{(t)}_i}
(q^{j-i}Q_tQ_s^{-1}-1)$$ $$=
Q_s^{| \lambda^{(t)}|}\prod_{i=1}^{\lambda^{(t)'}_1} \prod_{j=1}^{\lambda^{(t)}_i}
(q^{\lambda^{(t)}_i-j+1-i}Q_tQ_s^{-1}-1)=
Q_s^{| \lambda^{(t)}|}\prod_{(i,j) \in [\lambda^{(t)}]}
 (q^{h_{i,j}^{\lambda^{(s)}}}Q_tQ_s^{-1}-1), 
$$
as required.
\\

Now, assume that our assertion holds when $\#[\lambda^{(s)}] \in \{0,1,2,\ldots,N-1\}$. We want to show that it also holds when $\#[\lambda^{(s)}]=N \geq 1$.
If $\lambda^{(s)}\neq \emptyset$, then there exists $i$ such that $(i,\lambda_i^{(s)})$ is a removable node of $\lambda^{(s)}$. Let $\nu$ be the multipartition defined by
$$\nu^{(s)}_i:=\lambda^{(s)}_i-1,\,\,\, \nu^{(s)}_j:=\lambda^{(s)}_j \text{ for all } j\neq i,\,\,\, \nu^{(t)}:=\lambda^{(t)} \text{ for all } t\neq s.$$
Then $[\lambda^{(s)}]=[\nu^{(s)}] \cup \{(i,\lambda^{(s)}_i)\}$. Since Equality $(\ref{X})$ holds for
$X_{st}^{\nu}$ and 
$$X_{st}^{\lambda}=X_{st}^{\nu} \cdot \left((q^{\lambda^{(s)}_i-i}Q_s-q^{\lambda_1^{(t)}}Q_t) 
\prod_{k=1}^{\lambda_1^{(t)}}
\frac{q^{\lambda^{(s)}_i-i}Q_s-q^{k-1-\lambda_k^{(t)'}}Q_t}{q^{\lambda^{(s)}_i-i}Q_s-q^{k-\lambda_k^{(t)'}}Q_t}\right),$$
it is enough to show that (to simplify notation, from now on set $\lambda:=\lambda^{(s)}$ and $\mu:=\lambda^{(t)}$):
\begin{equation}\label{add1}
(q^{\lambda_i-i}Q_s-q^{\mu_1}Q_t)
\prod_{k=1}^{\mu_1}
\frac{q^{\lambda_i-i}Q_s-q^{k-1-\mu_k'}Q_t}{q^{\lambda_i-i}Q_s-q^{k-\mu_k'}Q_t}
=
q^{-\mu_{\lambda_i}'}
Q_t  (q^{\lambda_i-i+\mu_{\lambda_i}'-\lambda_i+1}Q_sQ_t^{-1}-1)\cdot A \cdot B,
\end{equation}
where
$$A:=
\prod_{k=1}^{\lambda_i-1}
\frac{q^{\lambda_i-i+\mu_k'-k+1}Q_sQ_t^{-1}-1}
{q^{\lambda_i-i+\mu_k'-k}Q_sQ_t^{-1}-1}$$
and
$$B:=\prod_{k=1}^{\mu_{\lambda_i}'}
\frac{q^{\mu_k-k+\lambda_{\lambda_i}'-\lambda_i+1}Q_tQ_s^{-1}-1}
{q^{\mu_k-k+\lambda_{\lambda_i}'-\lambda_i}Q_tQ_s^{-1}-1}.$$
\\
Note that, since $(i,\lambda_i)$ is a removable node of $\lambda$, we have $\lambda_{\lambda_i}'=i$.
We have that
$$A=q^{\lambda_i-1}
\prod_{k=1}^{\lambda_i-1}
\frac{q^{\lambda_i-i}Q_s-q^{k-1-\mu_k'}Q_t}
{q^{\lambda_i-i}Q_s-q^{k-\mu_k'}Q_t}.$$
Moreover, by Lemma \ref{conj cont}, for $y=q^{i-\lambda_i}Q_tQ_s^{-1},$ we obtain that
$$B=\frac{(q^{\mu_1+i-\lambda_i}Q_tQ_s^{-1}-1)}
{(q^{-\mu_{\lambda_i}'+\lambda_i-1+i-\lambda_i}Q_tQ_s^{-1}-1)}\cdot
\left(\prod_{k=\lambda_i}^{\mu_{1}}
\frac{q^{-\mu_k'+k-1+i-\lambda_i}Q_tQ_s^{-1}-1}
{q^{-\mu_k'+k+i-\lambda_i}Q_tQ_s^{-1}-1}\right),$$
\emph{i.e.,}
$$B=Q_t^{-1}q^{\mu_{\lambda_i}'-\lambda_i+1}
\frac{(q^{\lambda_i-i}Q_s-q^{\mu_1}Q_t)}
{(q^{\mu_{\lambda_i}'-\lambda_i+1+\lambda_i-i}Q_sQ_t^{-1}-1)}\cdot
\left(\prod_{k=\lambda_i}^{\mu_{1}}
\frac{q^{\lambda_i-i}Q_s-q^{k-1-\mu_k'}Q_t}{q^{\lambda_i-i}Q_s-q^{k-\mu_k'}Q_t}\right).$$
\\
Hence, Equality (\ref{add1}) holds.

\end{document}